\newtheorem{theorem}{Theorem}[section]
\newtheorem*{theorem*}{Theorem}
\newtheorem{lemma}{Lemma}[section]
\newtheorem{corollary}[theorem]{Corollary}
\newtheorem{proposition}{Proposition}[section]
\def\Ric{\text{Ric}}
\def\aint{\frac{\ \ }{\ \ }{\hskip -0.4cm}\int}
\def\Ric{\operatorname{Ric}}
\numberwithin{equation}{section}
\begin{document}
	\title[$\Ric_k<0$ manifolds]{An application of a $C^2$-estimate for a complex Monge-Amp\`ere equation}
	
\author{Chang Li}
\address{Chang Li. Hua Loo-Keng Center for Mathematical Sciences, Academy of Mathematics and Systems Science, Chinese Academy of Sciences, Beijing, P.R.China, 100190}
\email{chang\_li@pku.edu.cn}

\author{Lei Ni}
\address{Lei Ni. Department of Mathematics, University of California, San Diego, La Jolla, CA 92093, USA}
\email{leni@ucsd.edu}

\author{Xiaohua Zhu}

\address{Xiaohua Zhu. School of Mathematical Science, Peking University, Beijing, China}
\email{xhzhu@math.pku.edu.cn}

\subjclass[2010]{ 32Q10, 32Q15, 32W20, 53C55}
\keywords{K\"ahler metrics,  holomorphic sectional curvature, $\Ric_k$, complex Monge-Amp\`ere equation, Schwarz Lemma, nef and ample line bundle, canonical line bundle.}

\begin{abstract} By studying a complex Monge-Amp\`ere equation, we present an alternate proof to a recent result of Chu-Lee-Tam concerning the projectivity of a compact K\"ahler manifold $N^n$ with $\Ric_k< 0$ for some integer $k$ with $1<k<n$,  and   the ampleness of the canonical line bundle $K_N$.

\bigskip

\centerline{ {\it Dedicated to the 110th anniversary of S. S. Chern with a great honor.} }

\medskip

\end{abstract}


\maketitle

\section{Introduction}
In a recent preprint \cite{Chu-Lee-Tam}, the authors proved the following result.

\begin{theorem}[Chu-Lee-Tam] \label{thm:main1}Assume that $(N^n, \omega_g)$ is a compact K\"ahler manifold ($n=\dim_{\mathbb{C}}(N)$) with $\Ric_k(X, \overline{X})\le -(k+1)\sigma |X|^2$ for some $\sigma\ge0$. Then $K_N$ is nef and is ample if $\sigma>0$.
\end{theorem}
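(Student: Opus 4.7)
The plan is to establish the theorem via a family of complex Monge-Amp\`ere equations, with the $C^2$-estimate mentioned in the title playing the role of the decisive analytic input.

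First, for each $t>0$ I would seek a K\"ahler metric
\[
\omega_t := t\omega_g - \Ric(\omega_g) + \sqrt{-1}\,\p\dbar\varphi_t
\]
in the class $t[\omega_g] + c_1(K_N)$ (suitably normalized), solving
\[
\lf(t\omega_g - \Ric(\omega_g) + \sqrt{-1}\,\p\dbar\varphi_t\ri)^n = e^{\varphi_t + h_g}\,\omega_g^n,
\]
where $h_g$ is the Ricci potential of $\omega_g$. A solution yields the twisted K\"ahler-Einstein relation $\Ric(\omega_t) = -\omega_t + t\omega_g$. For $t$ large, $t\omega_g - \Ric(\omega_g)>0$ and solvability is the standard Aubin-Yau theorem; I would then run the Aubin continuity method in $t$, pushing $t\to 0^+$. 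Openness is routine (the linearization is $\Delta_{\omega_t}-1$, which is invertible on a compact K\"ahler manifold), and closedness reduces to uniform two-sided bounds $C^{-1}\omega_g \le \omega_t \le C\omega_g$. Via the equation itself and the Evans-Krylov/Schauder theory, this in turn reduces to (i) a $C^0$ bound on $\varphi_t$, which is standard Moser iteration once a trace upper bound is known, and (ii) a uniform upper bound on $\tr_{\omega_g}(\omega_t)$.

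The $C^2$ estimate (ii) is the main obstacle. The classical Yau/Aubin computation of $\Delta_{\omega_t}\log \tr_{\omega_g}(\omega_t)$ produces curvature terms of $\omega_g$ weighted by the eigenvalues of $\omega_t$ relative to $\omega_g$; in the classical setting one uses $\Ric(\omega_g)$ to dominate these bad terms, but here I only have the partial-trace control $\Ric_k \le -(k+1)\sigma g$. The strategy is to apply the maximum principle to $u := \log \tr_{\omega_g}(\omega_t) - A\varphi_t$ for a suitable large $A>0$. At a maximum point $p$, pick a unitary frame diagonalizing $\omega_t$ with respect to $\omega_g$ with eigenvalues $\lambda_1\ge\cdots\ge\lambda_n$, and then contract the bisectional curvature of $\omega_g$ against the distinguished $k$-dimensional complex subspace spanned by $e_1,\dots,e_k$ (or a carefully chosen $k$-plane aligned with the largest eigenvalues). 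The $\Ric_k$ hypothesis then contributes a definite negative term to $\Delta_{\omega_t} u$, which combined with the $-A\varphi_t$ penalty and the $t\omega_g$ twist on the right-hand side dominates the remaining cross terms and closes the argument, yielding $\tr_{\omega_g}(\omega_t)(p) \le C$ independent of $t$. The delicate point is that $\Ric_k$ provides only partial-trace negativity, so the alignment between the Schwarz-lemma frame and the $k$-plane witnessing the curvature bound must be arranged with some care.

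With the uniform $C^2$ estimate in hand, letting $t\to 0^+$ exhibits $c_1(K_N)$ as a limit of K\"ahler classes, proving $K_N$ is nef. When $\sigma>0$, a sharpening of the same Schwarz-type inequality yields a uniform positive lower bound on $\omega_t^n/\omega_g^n$ as $t\to 0^+$; together with the $C^2$ bound this forces the limit metric $\omega_\infty$ to be a genuine K\"ahler-Einstein metric satisfying $\Ric(\omega_\infty)=-\omega_\infty$ and representing $c_1(K_N)$, so $c_1(K_N)$ is a K\"ahler class and $K_N$ is ample.
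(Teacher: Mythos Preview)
Your overall architecture---a one-parameter family of Monge--Amp\`ere equations, continuity in the parameter, and a Schwarz-type $C^2$-estimate---matches the paper's. But the decisive analytic step, the $C^2$-estimate, has a genuine gap, and the paper's proof differs from your sketch in exactly the two places needed to close it.

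First, the paper does \emph{not} use the standard equation $(t\omega-\Ric(\omega)+\sqrt{-1}\,\p\dbar\varphi_t)^n=e^{\varphi_t}\omega^n$. It introduces a deliberately modified Monge--Amp\`ere equation with an extra factor $\frac{k-1}{2(n-k)}u_\epsilon$ in the exponent, so that
\[
\Ric(\tilde\omega_\epsilon)=-\tilde\omega_\epsilon+(\epsilon+\epsilon_0)\omega-\tfrac{k-1}{2(n-k)}\sqrt{-1}\,\p\dbar u_\epsilon.
\]
This extra $\p\dbar u_\epsilon$ term is not cosmetic: it is engineered to cancel the $\langle\sqrt{-1}\,\p\dbar u_\epsilon,\omega\rangle_{\tilde g}$ contributions that appear when the curvature estimate is plugged into the Schwarz inequality, and it is also why the test function is $\log G-\frac{k-1}{2(n-k)}u_\epsilon$ with a \emph{specific} coefficient rather than $\log G-A\varphi$ with a large unspecified $A$. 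With your unmodified equation these terms do not cancel and the estimate does not close.

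Second, your proposed handling of the curvature term---``contract the bisectional curvature of $\omega_g$ against the $k$-plane spanned by the top eigenvectors''---does not work as stated. The bad term in the Laplacian of the trace is a full double sum $\tilde g^{i\bar j}\tilde g^{k\bar l}R_{i\bar j k\bar l}$ over \emph{all} indices, and restricting attention to a single $k$-plane leaves the remaining cross terms uncontrolled. The paper instead proves an algebraic lemma (Lemma~\ref{Riemann-curvature-estimate}, via a Royden/averaging argument over $\mathbb{S}^{2n-1}$) that converts the $\Ric_k\le -(k+1)\sigma$ hypothesis into a global inequality for $\tilde g^{i\bar j}\tilde g^{k\bar l}R_{i\bar j k\bar l}$ in terms of $G$, $|g|_{\tilde g}^2$, $\tr_{\tilde g}\Ric$, and $\langle\omega,\Ric\rangle_{\tilde g}$. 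The last two quantities are precisely the ones absorbed by the modified equation. The paper explicitly flags that, unlike the $k=1$ case of Wu--Yau where Royden's Schwarz lemma applies directly, here ``nontrivial manipulations are needed'' and one ``cannot infer any useful information from (\ref{schwarz-lemma}) directly under $\Ric_k<0$ for some $k>1$.''

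A smaller but related point: the paper estimates $G=\tr_{\tilde\omega_\epsilon}\omega$ (the Schwarz-lemma direction, where the curvature of the fixed metric $\omega$ enters as the \emph{target}), and only afterwards deduces a bound on $\tr_\omega\tilde\omega_\epsilon$ via the elementary inequality $\tr_\omega\tilde\omega\le\frac{1}{(n-1)!}(\tr_{\tilde\omega}\omega)^{n-1}\tilde\omega^n/\omega^n$. Your plan to bound $\tr_{\omega_g}\omega_t$ directly puts the curvature of $\omega_g$ in the less convenient slot for exploiting the $\Ric_k$ hypothesis.
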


The curvature notion  $\Ric_k$ is defined as the Ricci curvature of the $k$-dimensional subspaces of the holomorphic tangent bundle $T'N$. Hence  it coincides with the holomorphic sectional curvature $H(X)$ when $k=1$, and with the Ricci curvature when $k=n=\dim_{\mathbb{C}}(N)$. The condition $\Ric_k>0$ is significantly different from its Riemannian analogue, i.e. the so-called $q$-Ricci of Bishop-Wu \cite{BC}, since it exams only the Ricci curvature of the  subspaces in the holomorphic tangent space $T'N$, thus unlike its Riemannian analogue,  $\Ric_k>0$ ($<0$) does not imply $\Ric_{k+1}>0$ ($<0$).  The study of the condition  of $\Ric_k<0$ was initiated by the second author \cite{Ni-cpam} to generalize the hyperbolicity of Kobayashi to the $k$-hyperbolicity of  a compact K\"ahler manifold. It is closely related to the degeneracy of holomorphic mappings from $\mathbb{C}^k$ into concerned manifolds (cf. Theorem 1.3 of \cite{Ni-cpam}). In fact a generalization to Royden's result was proved there. Moreover it was proved  recently by the second author that $\Ric_k>0$ implies that $M$ is projective and rational connected. The above result of Chu-Lee-Tam answers a question raised by the second author in \cite{Ni-cpam}, namely  the projectivity of a compact K\"ahler manifold with $\Ric_k<0$ affirmatively. In view of the fact that $\Ric_1<0$ is the same as the holomorphic section curvature $H<0$, Theorem \ref{thm:main1} generalizes the earlier work of \cite{Wu-Yau, Tosatti-Yang}.

The proof of \cite{Chu-Lee-Tam} is via the study of a twisted K\"ahler-Ricci flow. In this note we provide a direct alternate proof  via the Aubin-Yau solution \cite{Aub,Yau} to a complex Monge-Amp\`ere equation (which is similar to the equation for the K\"ahler-Einstein metric in the negative first Chern class). This was the method utilized in \cite{Wu-Yau}. Comparing with \cite{Wu-Yau} here a  modification on the Monge-Amp\`ere equation  (cf. (\ref{MA-equ})) is necessary to adapt the method to the curvature condition $\Ric_k<0$ when $n>k>1$.  This makes the derivation of the estimates more involved.  The method here also extends to the more general setting considered in \cite{Chu-Lee-Tam}.

It was proved in \cite{Ni-Zheng2} that any compact K\"ehler manifold with the second scalar curvature $S_2>0$ (simply put $S_k$ is the average of $\Ric_k$) must be  projective. It remains an interesting question if $S_2<0$, or $\Ric^\perp_k<0$, also implies the projectivity.
For more backgrounds and references related to the theorem  please refer to \cite{Ni, Ni-cpam, Chu-Lee-Tam}.  One can also find the definitions and motivations of several other curvature notions, including $S_2, \Ric^\perp_k$, and problems related to them in \cite{Ni}. The geometry of $\Ric_k<0$ compares sharply with the case of $\Ric_k>0$. This is discussed at the end of Section 3 and in the Appendix via a construction of J. M$^{\mbox{c}}$Kernan.

\section{Preliminaries}
Since the result is known for $k=1, n$ below we  assume that $1<k<n$ in this section and the next.
Here we collect some algebraic estimates as consequences of the assumption $\Ric_k(X, \overline{X})\le -(k+1)\sigma |X|^2, \forall\,  (1,0)$-type tangent vector $X$. They are useful in obtaining key estimates for a Monge-Amp\`ere equation (cf. (\ref{MA-equ}) below).  The first is Lemma 2.1 of \cite{Chu-Lee-Tam}.

\begin{lemma}\label{lemma:Lee-Tam} Under the assumption that $\Ric_k(X, \overline{X})\le -(k+1)\sigma |X|^2$ the following estimate holds
\begin{equation}\label{eq:23}
(k-1) |X|^2 \Ric(X, \overline{X})+(n-k)R(X,\overline{X}, X, \overline{X})\le -(n-1)(k+1)\sigma |X|^4.
\end{equation}
\end{lemma}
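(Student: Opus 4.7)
The plan is to derive the claimed inequality by averaging the hypothesis $\Ric_k(X,\overline{X})\le -(k+1)\sigma|X|^2$ over all $k$-dimensional subspaces containing $X$, then collecting terms via a binomial identity.

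First I would normalize $|X|=1$ (this is harmless since both sides of \eqref{eq:23} are homogeneous of degree $4$ in $X$, so the inequality at unit $X$ rescales correctly). Then I would choose an orthonormal basis $\{X,e_2,\ldots,e_n\}$ of $T'_pN$ and, for each $(k-1)$-subset $I=\{i_1,\ldots,i_{k-1}\}\subset\{2,\ldots,n\}$, write the hypothesis for the $k$-plane $\Sigma_I=\operatorname{span}\{X,e_{i_1},\ldots,e_{i_{k-1}}\}$:
\begin{equation*}
R(X,\overline{X},X,\overline{X})+\sum_{i\in I}R(X,\overline{X},e_i,\overline{e_i})\le -(k+1)\sigma.
\end{equation*}

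Next I would sum these $\binom{n-1}{k-1}$ inequalities. On the left, $R(X,\overline{X},X,\overline{X})$ appears $\binom{n-1}{k-1}$ times, while each fixed term $R(X,\overline{X},e_i,\overline{e_i})$ (for $i\ge 2$) appears $\binom{n-2}{k-2}$ times (the number of $(k-1)$-subsets of $\{2,\ldots,n\}$ containing $i$). Using $\sum_{i=2}^n R(X,\overline{X},e_i,\overline{e_i})=\Ric(X,\overline{X})-R(X,\overline{X},X,\overline{X})$ and the identity $\binom{n-1}{k-1}-\binom{n-2}{k-2}=\binom{n-2}{k-1}$, the summed inequality becomes
\begin{equation*}
\binom{n-2}{k-1}R(X,\overline{X},X,\overline{X})+\binom{n-2}{k-2}\Ric(X,\overline{X})\le -\binom{n-1}{k-1}(k+1)\sigma.
\end{equation*}

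Finally I would divide by the common factor $\tfrac{(n-2)!}{(k-1)!(n-k)!}$, which converts the three binomial coefficients to $n-k$, $k-1$, and $n-1$ respectively, giving \eqref{eq:23} with $|X|=1$; undoing the normalization produces the $|X|^2$ and $|X|^4$ factors in the stated form. No step looks technically difficult here—the only place one must be a bit careful is verifying that the definition of $\Ric_k$ indeed allows the hypothesis to be applied to \emph{every} $k$-plane through $X$ (the hypothesis is stated as a universal bound, so this is built in), and keeping track of the combinatorial multiplicities when two indices coincide. Once the averaging is set up correctly, the rest is bookkeeping with binomial coefficients.
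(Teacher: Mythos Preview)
Your argument is correct and is precisely the approach the paper has in mind: the paper merely writes that ``the result follows by summing $\Ric_k(X,\overline{X})\le -(k+1)\sigma|X|^2$ for a suitable chosen unitary basis,'' and you have carried out exactly that summation over all coordinate $k$-planes through $X$, with the binomial bookkeeping verified. There is nothing to add.
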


The result follows by summing $ \Ric_k(X, \overline{X})\le -(k+1)\sigma |X|^2$ (the assumption) for a suitable chosen unitary basis.
By using a Royden's trick \cite{Roy} the following result was derived out of Lemma \ref{lemma:Lee-Tam} (cf.
 \cite[Lemma 2.2]{Chu-Lee-Tam}).
\begin{lemma}\label{Riemann-curvature-estimate}Let $(N, \omega)$ be a compact K\"ahler manifold with
\begin{align}\label{k-ricci}\mathrm{Ric}_k(X,\overline{X}) \leq -(k+1)\sigma|X|^2
\end{align}
 for some $\sigma \geq 0$.  Let $\tilde \omega=\omega_{\tilde g}$ be another K\"ahler metric on $N$. Set
$$G=\mathrm{tr}_{\tilde{\omega}}\omega.$$
Then  the following estimate holds
\begin{equation}\label{eq:22}
 2 \tilde{g}^{i \bar{j}} \tilde g^{k \bar{l}} \mathrm{R}_{i \bar{j} k \bar{l}}
\leq  \frac{-(n-1)(k+1)\sigma}{n-k}\left(G^2+|g|_{\tilde{g}}^{2}\right)
-\frac{k-1}{n-k} G \cdot \mathrm{tr}_{\tilde{g}} \mathrm{Ric}
-\frac{k-1}{n-k} \left\langle \omega, \mathrm{Ric} \right\rangle_{\tilde{g}}.
\end{equation}
\end{lemma}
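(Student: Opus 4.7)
The plan is to work pointwise after simultaneously diagonalizing the two metrics. At a fixed point $p$, choose a frame in which $g_{i\bar j}(p) = \delta_{ij}$ and $\tilde g_{i\bar j}(p) = \lambda_i \delta_{ij}$ with $\lambda_i > 0$. In these coordinates one has $G = \sum_i \lambda_i^{-1}$, $|g|_{\tilde g}^2 = \sum_i \lambda_i^{-2}$, $\mathrm{tr}_{\tilde g}\mathrm{Ric} = \sum_i R_{i\bar i}/\lambda_i$, and $\langle \omega, \mathrm{Ric}\rangle_{\tilde g} = \sum_i R_{i\bar i}/\lambda_i^2$, while the left-hand side of (\ref{eq:22}) becomes $2\sum_{i,k} R_{i\bar i k\bar k}/(\lambda_i \lambda_k)$. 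Splitting off the diagonal ($i=k$) contribution, it suffices to bound $\sum_i R_{i\bar i i\bar i}/\lambda_i^2$ and $\sum_{i\neq k} R_{i\bar i k\bar k}/(\lambda_i \lambda_k)$ separately.

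For the diagonal piece I apply Lemma \ref{lemma:Lee-Tam} to the unit vector $X = e_i$, obtaining $(k-1) R_{i\bar i} + (n-k) R_{i\bar i i\bar i} \le -(n-1)(k+1)\sigma$; multiplying by $\lambda_i^{-2}$ and summing in $i$ yields
\[
(n-k) \sum_i \frac{R_{i\bar i i\bar i}}{\lambda_i^2} \le -(n-1)(k+1)\sigma\, |g|_{\tilde g}^2 - (k-1) \langle \omega, \mathrm{Ric}\rangle_{\tilde g}.
\]

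For the mixed piece I use Royden's polarization: set $X = \sum_i \epsilon_i \lambda_i^{-1/2} e_i$ with independent random phases $\epsilon_i$ on the unit circle, and average. Then $|X|^2 = G$ is deterministic, while $E[\epsilon_i \bar\epsilon_j \epsilon_k \bar\epsilon_l]$ vanishes unless the multisets $\{i,k\}$ and $\{j,l\}$ agree, leaving only the patterns $i=j=k=l$, $(i=j,k=l,i\neq k)$, and $(i=l,k=j,i\neq k)$. The K\"ahler symmetry $R_{i\bar k k\bar i} = R_{i\bar i k\bar k}$ then collapses the last two patterns to give
\[
E\bigl[R(X,\bar X, X, \bar X)\bigr] = \sum_i \frac{R_{i\bar i i\bar i}}{\lambda_i^2} + 2\sum_{i\neq k} \frac{R_{i\bar i k\bar k}}{\lambda_i \lambda_k}, \qquad E\bigl[\mathrm{Ric}(X,\bar X)\bigr] = \mathrm{tr}_{\tilde g}\mathrm{Ric}.
\]
Applying Lemma \ref{lemma:Lee-Tam} to $X$ (noting $|X|^2$ is constant, so one may take expectation freely) yields
\[
(n-k)\, E\bigl[R(X,\bar X, X, \bar X)\bigr] \le -(n-1)(k+1)\sigma\, G^2 - (k-1)\, G\, \mathrm{tr}_{\tilde g}\mathrm{Ric}.
\]

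Adding the two displayed bounds, the combined left-hand side equals $2(n-k) \sum_{i,k} R_{i\bar i k\bar k}/(\lambda_i \lambda_k)$ (the diagonal and off-diagonal sums reassemble into the full trace), so dividing by $n-k$ produces precisely (\ref{eq:22}). The main delicate step is the polarization: one must carefully enumerate which quadruples survive the averaging over the $\epsilon_i$ and then invoke the Bianchi-type K\"ahler symmetry to recognize the two surviving off-diagonal patterns as the same bisectional combination appearing in the target trace.
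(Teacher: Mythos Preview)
Your proof is correct. The route differs from the paper's in the polarization step: the paper performs a single average of the inequality from Lemma~\ref{lemma:Lee-Tam} over the unit sphere $\mathbb{S}^{2n-1}$ (using the Berger-type moment identities, which directly produce the combination $G^2+|g|_{\tilde g}^2$ and both Ricci terms at once), whereas you split the argument into two pieces---a direct application of Lemma~\ref{lemma:Lee-Tam} to each frame vector $e_i$ for the diagonal $i=k$ contribution, and Royden's random-phase averaging for the full expectation---then add them. The paper in fact remarks that it is deliberately replacing Royden's trick by the sphere average ``since the argument is more transparent,'' so you have essentially reconstructed the alternative the authors set aside. Your version is slightly more hands-on but has the advantage that the constancy of $|X|^2=G$ under the phase average makes the interaction with the $|X|^2\Ric(X,\bar X)$ term completely explicit; the paper's single integral is cleaner bookkeeping but requires knowing the fourth-moment sphere identities.
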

\begin{proof} Here we provide a proof using the averaging technique (cf. Appendix of \cite{Ni-cpam}) instead of Royden's trick since the argument is more transparent. Pick a normal frame $\{\frac{\partial}{\partial z^i}\}$ so that $\tilde{g}_{i\bar{j}}=\delta_{ij}$ and $g_{i\bar{j}}=|\lambda_i|^2 \delta_{ij}$. Then $\{\frac{\partial}{\partial w^i}\}$, with $\frac{\partial}{\partial w^i}:=\frac{1}{\lambda_i}\frac{\partial}{\partial z^i}$, is a unitary frame for $\omega$. Lemma \ref{lemma:Lee-Tam} implies that (Einstein convention applied)
\begin{eqnarray*}&\,&
2(n-k)R^{\omega}_{i\bar{i}j\bar{j}}|\lambda_i|^2|\lambda_j|^2+(k-1)G\Ric^{\omega}_{s \bar{s}} |\lambda_s|^2+(k-1)\Ric^{\omega}_{i\bar{i}}|\lambda_i|^4 \\
&=& n(n+1)\aint_{\mathbb{S}^{2n-1}} (k-1) |Y|^2 \Ric^\omega(Y, \overline{Y})+(n-k)R^\omega(Y,\overline{Y}, Y, \overline{Y})\\
&\le& -(n-1)(k+1) n(n+1)\sigma \aint_{\mathbb{S}^{2n-1}} |Y|^4\\
&=&-(n-1)(k+1)\sigma\left( G^2 + |\lambda_\gamma|^4\right).
\end{eqnarray*}
Here $Y=\lambda_i w^i \frac{\partial}{\partial w^i}$ with respect to a normal frame $\{\frac{\partial}{\partial w^i}\}$, $\Ric^{\omega}$ and $R^{\omega}_{i\bar{j}k\bar{l}}$ are the Ricci and curvature tensor expressed with respect to the metric $\omega$ (namely the unitary frame $\{\frac{\partial}{\partial w^i}\}$.
The result then follows by identifying the terms invariantly.
\end{proof}
For the application it is useful to write (\ref{eq:22}) as
\begin{eqnarray}
2 \tilde{g}^{i \bar{j}} \tilde g^{k \bar{l}} \mathrm{R}_{i \bar{j} k \bar{l}}
&\leq&  \frac{-(n-1)(k+1)\sigma}{n-k}\left(G^2+|g|_{\tilde{g}}^{2}\right) -2\frac{k-1}{n-k} G \cdot \mathrm{tr}_{\tilde{g}} \mathrm{Ric}\nonumber \\
&\quad& +\frac{k-1}{n-k}\left( \langle \Ric, \tilde{\omega}\rangle_{\tilde{g}}\langle \omega, \tilde{\omega}\rangle_{\tilde{g}}-\langle \omega, \Ric\rangle_{\tilde{g}}\right). \label{eq:useful}
\end{eqnarray}

\section{Proof of Theorem \ref{thm:main1}}
Assume that  the canonical line bundle $K_N$ of $(N, \omega)$ is  not nef.  Then there exists $\epsilon_0>0$ such that $\epsilon_0[\omega]-C_1(N)$ is nef but not K\"ahler.  Thus,  $\forall \epsilon>0$, $(\epsilon+\epsilon_{0})[\omega]-C_{1}(N)$ is K\"ahler.  This means that there exists  a smooth function $\phi_{\epsilon}$ such that
\begin{equation}\label{eq:defKM}
\omega_\epsilon:={} (\epsilon_{0}+\epsilon) \omega - \mathrm{Ric}(\omega)+\sqrt{-1} \partial \bar{\partial} \phi_{\epsilon}>0.
\end{equation}

By Aubin-Yau's existence theorem and  a prior estimate for a complex Monge-Amp\`ere equation, we first prove the theorem below.

\begin{theorem}\label{nef}
Let $(N, \omega)$ be a compact K\"ahler manifold which satisfies (\ref{k-ricci}) for some $\sigma\ge0$. Then $K_N$ is nef.
\end{theorem}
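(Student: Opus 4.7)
The plan is to establish Theorem \ref{nef} by contradiction, along the lines of \cite{Wu-Yau} but with a modified complex Monge--Amp\`ere equation tailored to the $\Ric_k$ condition.

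First, assume that $K_N$ is not nef, so that (\ref{eq:defKM}) produces a K\"ahler metric $\omega_\epsilon$ for every $\epsilon>0$. The goal is to extract a uniform a priori estimate for a family of metrics $\tilde{\omega}_\epsilon$, independent of $\epsilon$, that survives the limit $\epsilon\to 0$, thereby producing a K\"ahler representative of $\epsilon_0[\omega]-C_1(N)$ and contradicting its non-K\"ahlerness.

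To this end, invoking Aubin--Yau \cite{Aub,Yau}, I would solve a Monge--Amp\`ere equation of the form
\[
(\omega_\epsilon+\sqrt{-1}\partial\bar\partial u_\epsilon)^n = e^{u_\epsilon + F_\epsilon}\omega^n,
\]
with an auxiliary function $F_\epsilon$ chosen specifically for the $\Ric_k$ setting; this is the modification of the usual Monge--Amp\`ere equation alluded to in the Introduction. Setting $\tilde{\omega}_\epsilon:=\omega_\epsilon+\sqrt{-1}\partial\bar\partial u_\epsilon$, taking $\partial\bar\partial\log$ of both sides and using (\ref{eq:defKM}) yields an identity relating $\Ric(\tilde{\omega}_\epsilon)$, $\omega$, $\omega_\epsilon$ and $\partial\bar\partial$-terms that vanish at interior extrema. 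A standard $C^0$ estimate for $u_\epsilon$ follows from the maximum principle applied directly to the Monge--Amp\`ere equation.

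The heart of the argument is a uniform $C^2$ bound $G:=\tr_{\tilde{\omega}_\epsilon}\omega\le C$ with $C$ independent of $\epsilon$. I would apply the maximum principle to an auxiliary quantity such as $Q=\log G - A u_\epsilon$ for a large constant $A$ to be chosen, and compute $\tilde{\Delta}Q$. The Schwarz-lemma-type computation of Yau produces a lower bound for $\tilde{\Delta}\log G$ in terms of $\tilde{g}^{i\bar j}\tilde{g}^{k\bar l}R_{i\bar j k\bar l}$, to which I would apply the key estimate (\ref{eq:useful}) from Lemma \ref{Riemann-curvature-estimate}.

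The main obstacle, compared to the case $k=1$ treated in \cite{Wu-Yau}, is controlling the residual Ricci terms $G\cdot\tr_{\tilde g}\Ric$ and $\langle\Ric,\tilde{\omega}\rangle_{\tilde g}\langle\omega,\tilde{\omega}\rangle_{\tilde g}-\langle\omega,\Ric\rangle_{\tilde g}$ appearing on the right of (\ref{eq:useful}); these carry the coefficient $(k-1)/(n-k)$ and so drop out only when $k=1$. The choice of $F_\epsilon$ in the Monge--Amp\`ere equation should be engineered precisely so that, after substituting $\Ric(\omega)$ via the equation itself together with (\ref{eq:defKM}), these terms combine favorably with the good quadratic term $-\frac{(n-1)(k+1)\sigma}{n-k}(G^2+|g|_{\tilde g}^2)$ and with $-A\tilde{\Delta}u_\epsilon = A(n-\tr_{\tilde{\omega}_\epsilon}\omega_\epsilon)$, to yield at the maximum of $Q$ an inequality forcing $G\le C$. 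Once this $C^2$ estimate is in hand, standard Evans--Krylov and Schauder theory upgrade it to $C^{\ell,\alpha}$ bounds of all orders, and a subsequence of $\tilde{\omega}_\epsilon$ converges smoothly as $\epsilon\to 0$ to a K\"ahler metric representing $\epsilon_0[\omega]-C_1(N)$, producing the required contradiction.
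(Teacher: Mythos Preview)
Your outline matches the paper's strategy closely: contradiction via a modified Monge--Amp\`ere equation, Schwarz-lemma computation for $\log G$, and the use of (\ref{eq:useful}) to handle the curvature term. Two specifics deserve correction, however.

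First, the constant in the auxiliary quantity is not a free ``large $A$'' to be chosen later; it is forced to be exactly $\frac{k-1}{2(n-k)}$. The paper takes the Monge--Amp\`ere right-hand side to be $e^{\frac{2n-k-1}{2(n-k)}u_\epsilon}\omega^n$, so that $\widetilde{\Ric}_\epsilon=-\tilde\omega_\epsilon+(\epsilon+\epsilon_0)\omega-\frac{k-1}{2(n-k)}\sqrt{-1}\partial\bar\partial u_\epsilon$, and then works with $\log G-\frac{k-1}{2(n-k)}u_\epsilon$. With these matched coefficients the dangerous term $\tr_{\tilde g_\epsilon}\Ric$ (which has no a priori sign) cancels identically after one substitutes $-\tilde\Delta_\epsilon u_\epsilon=-n-\tr_{\tilde g_\epsilon}\Ric+(\epsilon+\epsilon_0)G$; a generic large $A$ would leave an uncontrolled $\Ric$ term and the maximum-principle step would fail. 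The other delicate point is bounding $G\cdot\tr_{\tilde g_\epsilon}\Ric-\langle\omega,\Ric\rangle_{\tilde g_\epsilon}$, which the paper does by using the positivity $\tilde\omega_\epsilon>0$ to replace $\Ric_{i\bar i}$ by $(\epsilon+\epsilon_0)g_{i\bar i}+u_{\epsilon,i\bar i}$ against the nonnegative weight $\sum_{k\ne i}g_{k\bar k}$.

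Second, for the nefness conclusion you do not need Evans--Krylov or Schauder at all. The maximum principle already gives $\sup_N u_\epsilon\le C$ (hence $\tilde\omega_\epsilon^n\le C\omega^n$) and then $G\le C$ (hence $\tilde\omega_\epsilon\ge A\omega$) with $A>0$ independent of $\epsilon$; letting $\epsilon\to 0$ contradicts that $\epsilon_0[\omega]-C_1(N)$ is not K\"ahler. The higher-order estimates enter only in the ampleness part.
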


\begin{proof}For any  $\epsilon > 0$, we consider the  complex Monge-Amp\`ere equation for $\psi_\epsilon$,
\begin{equation}\label{MA-equ}
\left((\epsilon+\epsilon_{0}) \omega-\mathrm{Ric}(\omega)+\sqrt{-1} \partial \bar{\partial} (\phi_{\epsilon}+\psi_{\epsilon})\right)^{n}
={} e^{\phi_{\epsilon}+\psi_{\epsilon}+\frac{k-1}{2(n-k)}(\phi_{\epsilon}+\psi_{\epsilon})} \omega^{n}
\end{equation}
and
\begin{align}\label{positive}(\epsilon+\epsilon_{0}) \omega-\mathrm{Ric}(\omega)+\sqrt{-1} \partial \bar{\partial} (\phi_{\epsilon}+\psi_{\epsilon})>0.
\end{align}
By the Aubin-Yau theorem \cite{Aub, Yau},  there is a unique solution $\psi_\epsilon$ of (\ref{MA-equ}).
For simplicity,  let
\begin{equation}
\begin{aligned}
\tilde{\omega}_{\epsilon}:={} & (\epsilon+\epsilon_{0}) \omega-\mathrm{Ric}(\omega)+\sqrt{-1} \partial \bar{\partial} (\phi_{\epsilon}+\psi_{\epsilon})\\
={} &\omega_\epsilon+\sqrt{-1}\partial\bar{\partial} \psi_\epsilon,\\
u_{\epsilon}:={} & \phi_{\epsilon}+\psi_{\epsilon}.
\end{aligned}
\end{equation}
Then taking $\partial\bar{\partial} \log(\cdot )$ on both sides of  \eqref{MA-equ},  we  see that  (\ref{MA-equ}) is equivalent  to
\begin{equation}\label{tile-Ricci}
\begin{aligned}
\widetilde{\mathrm{Ric}}_{\epsilon}:=&\mathrm{Ric}(\tilde{\omega}_{\epsilon})
={}  \mathrm{Ric}(\omega)-\sqrt{-1} \partial \bar{\partial} \left(u_{\epsilon}+\frac{k-1}{2(n-k)}u_{\epsilon}\right)\\
={} & -\tilde{\omega}_{\epsilon}+(\epsilon+\epsilon_{0}) \omega- \sqrt{-1} \frac{k-1}{2(n-k)} \partial \bar{\partial}u_{\epsilon}.
\end{aligned}
\end{equation}

Let $G=G_\epsilon=\mathrm{tr}_{\tilde{\omega}_\epsilon}\omega$. Then by the calculation in the proof of the Schwarz Lemma \cite{Yau-sch},  and in particular (2.3) of \cite{Ni-cpam} (also see computations in the earlier work of \cite{Chern, Lu}), as well as the $C^2$-estimate computations in \cite{Aub, Siu} (a slight different calculation was done in \cite{Yau, Tian}),  we have that
\begin{equation}\label{schwarz-lemma}
\tilde{\Delta}_{\epsilon}(\log G) \geq \frac{1}{G} \left(\widetilde{\mathrm{Ric}}_{\epsilon_ {p \bar{q}}} {\tilde{g}_{\epsilon}}^{p \bar{j}} {\tilde{g}_{\epsilon}}^{i \bar{q}} g_{i \bar{j}}-{\tilde{g}_{\epsilon}}^{i \bar{j}} {\tilde{g}_{\epsilon}}^{k \bar{l}}\mathrm{R}_{i\bar{j}k\bar{l}}\right).
\end{equation}
Applying  Lemma \ref{Riemann-curvature-estimate} (namely (\ref{eq:useful}) to $G=G_\epsilon$ we have the estimate
\begin{equation}
\begin{aligned}\label{curvature-2}
 \frac{1}{G}\tilde{g}_{\epsilon}^{i \bar{j}} \tilde{g}_{\epsilon}^{k \bar{l}} \mathrm{R}_{i \bar{j} k \bar{l}}
\leq {} & \frac{-(n-1)(k+1)\sigma}{2(n-k)}\left(G+\frac{1}{G}|g|_{\tilde{g}_{\epsilon}}^{2}\right)
-\frac{k-1}{n-k}\mathrm{tr}_{\tilde{g}_{\epsilon}} \mathrm{Ric}\\
&+\frac{k-1}{2(n-k)} \frac{1}{G} \left( G \cdot \mathrm{tr}_{\tilde{g}_{\epsilon}} \mathrm{Ric} - \left\langle \omega, \mathrm{Ric} \right\rangle_{\tilde{g}_{\epsilon}} \right).
\end{aligned}
\end{equation}
Choosing local coordinates such that $(\tilde{g}_{\epsilon})_{{i \bar{j}}}=\delta_{ij}, \quad g_{i \bar{j}}=g_{i \bar{i}} \delta_{ij}$, then we also have
\begin{equation}
\begin{aligned}
 G \cdot \mathrm{tr}_{\tilde{g}_{\epsilon}} \mathrm{Ric} - \left\langle \omega, \mathrm{Ric} \right\rangle_{\tilde{g}_{\epsilon}}
= {} & \sum_{i}\mathrm{Ric}_{i \bar{i}} \left( \sum_k g_{k \bar{k}}-g_{i \bar{i}}\right)\\
= {} &\sum_{i}\left(\mathrm{Ric}_{i \bar{i}} (\sum_{k\ne i} g_{k\bar{k}})\right)\\
\leq {} & \sum_{i}\left( (\epsilon+\epsilon_0)g_{i \bar{i}} +u_{\epsilon_{i \bar{i}}}\right) \left(\sum_k g_{k \bar{k}}-g_{i \bar{i}} \right)\\
= {} & (\epsilon+\epsilon_0)G^2-(\epsilon+\epsilon_0)|g|_{\tilde{g}_{\epsilon}}^{2}
+G \cdot \tilde{\Delta}_{\epsilon} u_{\epsilon}- \left\langle \sqrt{-1}\partial \bar{\partial} u, \omega \right\rangle_{\tilde{g}_{\epsilon}}.
\end{aligned}
\end{equation}
Here we used \eqref{positive} in the third line. Plugging this into (\ref{curvature-2}), we have
\begin{equation}\label{Riemann-curv}
\begin{aligned}
 -\frac{1}{G}\tilde{g}^{k\bar{l}}_{\epsilon}\tilde{g}^{i\bar{j}}_\epsilon R_{i\bar{j}k\bar{l}}
\geq {} & \frac{\sigma(n-1)(k+1)-(k-1)(\epsilon+\epsilon_0)}{2(n-k)}G\\
& + \frac{\sigma(n-1)(k+1)+(k-1)(\epsilon+\epsilon_0)}{2(n-k)}\frac{|g|^2_{\tilde{g}_\epsilon}}{G}+\frac{(k-1)}{(n-k)}\mathrm{tr}_{\tilde{g}_\epsilon}\mathrm{Ric} \\
& - \frac{k-1}{2(n-k)}\tilde{\Delta}_{\epsilon} u_{\epsilon}+\frac{k-1}{2(n-k)}\frac{1}{G}\left\langle \sqrt{-1}\partial\bar{\partial}u_\epsilon,\omega \right\rangle_{\tilde{g}_\epsilon}.
\end{aligned}
\end{equation}
On the other hand, a direct calculation using (\ref{tile-Ricci}) can express the first term in (\ref{schwarz-lemma}) as
\begin{equation}\label{Rici-curv}
\begin{aligned}
 \frac{1}{G}\widetilde{\mathrm{Ric}}_{\epsilon_{p\bar{q}}}\tilde{g}^{p\bar{j}}_{\epsilon}
 \tilde{g}^{i\bar{q}}_{\epsilon}g_{i\bar{j}}
= &\frac{1}{G}\left\langle\widetilde{\mathrm{Ric}}_{\epsilon},\omega \right\rangle_{\tilde{g}_\epsilon}\\
= &\frac{1}{G}\left\langle-\tilde{w}_{\epsilon}+(\epsilon+\epsilon_0)\omega-\sqrt{-1}\partial\bar{\partial}(\frac{(k-1)}{2(n-k)} u_\epsilon), \omega \right\rangle_{\tilde{g}_\epsilon}\\
={} &\frac{1}{G}\left\langle-\tilde{w}_{\epsilon}+(\epsilon+\epsilon_0)\omega, \omega \right\rangle_{\tilde{g}_\epsilon}-\frac{1}{G}\frac{(k-1)}{2(n-k)}\left\langle \sqrt{-1} \partial\bar{\partial}u_\epsilon,\omega \right\rangle_{\tilde{g}_\epsilon}.
\end{aligned}
\end{equation}
Note that we used \eqref{tile-Ricci} in the third line above.
Combining \eqref{schwarz-lemma}, \eqref{Riemann-curv} and \eqref{Rici-curv}, we have
\begin{align}\label{laplace-log-G-1}
 \tilde{\Delta}_{\epsilon}(\log G)\notag
\geq {} & \frac{\sigma(n-1)(k+1)-(k-1)(\epsilon+\epsilon_0)}{2(n-k)}G\notag\\
& +\frac{\sigma(n-1)(k+1)+(k-1)(\epsilon+\epsilon_0)}{2(n-k)}\frac{|g|^2_{\tilde{g}_{\epsilon}}}{G}\notag\\
& +\frac{(k-1)}{(n-k)}\mathrm{tr}_{\tilde{g}_\epsilon}\mathrm{Ric}-\frac{(k-1)}{2(n-k)}\tilde{\Delta}_{\epsilon}u_{\epsilon}\notag\\
&+\frac{1}{G}\left\langle -\tilde{\omega}_{\epsilon}
+(\epsilon+\epsilon_0)\omega,\omega \right\rangle_{\tilde{g}_\epsilon}.
\end{align}
Hence
\begin{align}\label{laplace-log-G}
 \tilde{\Delta}_{\epsilon}\left(\log G-\frac{k-1}{2(n-k)} u_\epsilon\right)\notag
\geq {} & \frac{\sigma(n-1)(k+1)-(k-1)(\epsilon+\epsilon_0)}{2(n-k)}G\notag\\
& +\frac{\sigma(n-1)(k+1)+(k-1)(\epsilon+\epsilon_0)}{2(n-k)}\frac{|g|^2_{\tilde{g}_{\epsilon}}}{G}\notag\\
& +\frac{(k-1)}{(n-k)}\left(\mathrm{tr}_{\tilde{g}_\epsilon}\mathrm{Ric}-\tilde{\Delta}_{\epsilon}u_{\epsilon}\right)\notag\\
&+\frac{1}{G}\left\langle -\tilde{\omega}_{\epsilon}
+(\epsilon+\epsilon_0)\omega,\omega \right\rangle_{\tilde{g}_\epsilon}.
\end{align}
Next, we observe that
\begin{eqnarray*}
|g|^2_{\tilde{g}_\epsilon}&\geq& \frac{G^2}{n},\\
-\tilde{\Delta}_\epsilon u_\epsilon &=&-\tilde{g}_\epsilon^{i\bar{j}}\left(\tilde{g}_{\epsilon_{i\bar{j}}}
+\mathrm{Ric}_{i\bar{j}}-(\epsilon+\epsilon_0)g_{i\bar{j}}\right)\\
&=&-n-\mathrm{tr}_{\tilde{g}_\epsilon}\mathrm{Ric}+(\epsilon+\epsilon_0)G, \mbox { and }\\
\frac{1}{G}\left\langle (\epsilon+\epsilon_0)\omega-\tilde{w}_\epsilon, \omega \right\rangle_{\tilde{g}_\epsilon}&=&\frac{1}{G}(\epsilon+\epsilon_0)|g|^2_{\tilde{g}_\epsilon}-\frac{1}{G}G\geq-1.
\end{eqnarray*}
Plugging these three inequalities/equation above  into \eqref{laplace-log-G}, we see that
\begin{equation}
\begin{aligned}\label{eq:3-15}
&\tilde{\Delta}_{\epsilon}\left(\log G-\frac{(k-1)}{2(n-k)}u_{\epsilon}\right)\\
\geq {} & \left(\frac{n}{n}\cdot\frac{\sigma(n-1)(k+1)-(k-1)(\epsilon+\epsilon_0)}{2(n-k)}\right)G
+\left(\frac{(\epsilon+\epsilon_0)(k-1)2n}{(n-k)2n}\right)G\\
&+\left(\frac{\sigma (n-1)(k+1)+(k-1)(\epsilon+\epsilon_0)}{2(n-k)n}\right)G-1-\frac{n(k-1)}{n-k}\\
={} & \left(\frac{(n+1)\sigma(n-1)(k+1)}{2(n-k)n}+\frac{(\epsilon+\epsilon_0)(k-1)(n+1)}{2(n-k)n}\right) G-1-\frac{n(k-1)}{n-k}\\
\geq {} & \max\left\{\frac{(n+1)\sigma(n-1)(k+1)}{2(n-k)n},\frac{(\epsilon+\epsilon_0)(k-1)(n+1)}{2(n-k)n}\right\}\cdot   G-1-\frac{n(k-1)}{n-k}.\\
\end{aligned}
\end{equation}

Now we apply the maximum principle to get a lower estimate of $\tilde{\omega}_\epsilon$. At  the maximum of $u_\epsilon$, say $x_0$, since $\sqrt{-1}\partial\bar{\partial} u_\epsilon \le 0$ we have
 that $((\epsilon + \epsilon_0)\omega- \Ric(\omega))(x_0) \ge \tilde{\omega}_\epsilon>0$ and
\begin{eqnarray*}
e^{\frac{2n-k-1}{2(n-k)}\sup_N  u_\epsilon} &=& e^{\frac{2n-k-1}{2(n-k)}u_\epsilon(x_0)}\\
& \le&\left.\frac{ ((\epsilon + \epsilon_0)\omega - \Ric(\omega))^n}{\omega^n}\right|_{x=x_0}\\
&\le& C,\end{eqnarray*}
 for some $C$ independent of $\epsilon$.
This proves a uniform upper bound for $u_\epsilon$, and
hence that
\begin{equation}\label{eq:volupper}
\sup_N \frac{\tilde{\omega}_\epsilon^n}{\omega^n}\le C, \mbox{ equivalently } W_n\ge C^{-1}, \mbox{ with } W_n:=\frac{\omega^n}{\tilde{\omega}_\epsilon^n}.
\end{equation}
Again we apply the maximum principle to $\log G-\frac{(k-1)}{2(n-k)}u_{\epsilon}$. By (\ref{eq:3-15}), at the point $x_0'$, where the  maximum of $\log G-\frac{(k-1)}{2(n-k)}u_{\epsilon}$ is attained,
we have that
\begin{equation}\label{upper-bound}
G(x_0')\leq C \text{ for some C independent of } \epsilon.
\end{equation}
Since $G W_n^{\frac{k-1}{2n-k-1}}= G e^{ -\frac{k-1}{2(n-k)} u_\epsilon}$, we infer that $\sup_{N} \left( G  W_n^{\frac{k-1}{2n-k-1}}\right)$ is also attained at $x_0'$.  By GM-AM inequality  $G\cdot W_n^{\frac{k-1}{2n-k-1}} \le G \left(\frac{G}{n}\right)^{\frac{k-1}{(2n-k-1)n}}$ at $x_0'$.
This, together with (\ref{upper-bound}), implies $ \sup_{N} \left(G  W_n^{\frac{k-1}{2n-k-1}}\right)\le C$ for some $C>0$  independent of $\epsilon$.
Combining this with  (\ref{eq:volupper}) we have that
\begin{equation}\label{eq:keyc2} G\le C\, \mbox { hence } \tilde{\omega}_\epsilon\ge A\omega,\end{equation}
for a constant $A>0$ independent of $\epsilon$. This is a contradiction to that $\epsilon_0[\omega]-C_1(N)$ is not K\"ahler by taking $\epsilon\to 0$. This completes the proof of Theorem  \ref{nef}.
\end{proof}

A remark is appropriate to compare the above proof with that of \cite{Wu-Yau}. The idea of using an Aubin-Yau solution  is the same. The difference lies in the details. First we came up with a modified  Monge-Amp\`ere equation to accommodate the new curvature condition. Secondly Wu-Yau's proof \cite{Wu-Yau} of the $C^2$-estimate can be obtained  by a direct application of Royden's version of Yau's Schwarz lemma (precisely, Theorem 1, p554 of \cite{Roy}). Namely no additional proof is necessary  for bounding $G$ under the assumption of \cite{Wu-Yau} (namely the holomorphic sectional curvature $H<0$), in view of an obvious lower bound on $\Ric(\tilde{\omega})$ from (\ref{tile-Ricci}). By comparison, some nontrivial manipulations are needed above (at the least to  the best knowledge of the authors) to get the $C^2$-estimate since one can not infer any useful information from  (\ref{schwarz-lemma}) directly under $\Ric_k<0$ for some $k>1$.

Once the nefness of $K_N$ is established, the ampleness of $K_N$ follows as Theorem 7 of \cite{Wu-Yau} provided that $\sigma>0$. In this case we take $\epsilon_0=0$. By considering  the Monge-Amp\`ere equation (\ref{MA-equ}), repeating the argument above, since $\sigma>0$ is assumed now,   we still can have the uniform estimates (\ref{eq:volupper}), (\ref{eq:keyc2}) and the upper bound of $u_\epsilon$ from the key estimate (\ref{eq:3-15}) independent of $\epsilon$. Moreover
 the elementary inequality
$\operatorname{tr}_\omega \tilde{\omega}_\epsilon \le \frac{1}{(n-1)!}(\operatorname{tr}_{\tilde{\omega}_\epsilon} \omega)^{n-1}\frac{\tilde{\omega}_\epsilon^n}{\omega^n}$ implies that $\operatorname{tr}_\omega \tilde{\omega}_\epsilon \le C$. Hence we have that $\tilde{\omega}_\epsilon$ and $\omega$ are equivalent. Namely for some $C>0$ independent of $\epsilon$
\begin{equation}\label{eq:equiv}
C^{-1} \omega \le \tilde{\omega}_\epsilon\le C \omega.
\end{equation}
This also gives the $C^0$-estimate (namely the lower bound of $u_\epsilon$) by the equation (\ref{MA-equ}). The  $C^3$-estimate of Calabi \cite{Aub, Yau, Tian} also applies here (cf.  \cite{Tosatti-Yang} for an adapted calculation to a  settings similar to (\ref{MA-equ})).  Alternatively one can also use the $C^{2, \alpha}$-estimate of Evans as in \cite{Siu}. Uniform estimates for up to the third order derivatives of $u_\epsilon$ allow one to apply the Arzela-Ascoli compactness to get a convergent subsequence out of $u_\epsilon$ as $\epsilon \to 0$.

Taking $\epsilon\to 0$, and letting \begin{eqnarray*}
u_\infty&:=&\lim_{\epsilon\to 0} u_\epsilon,\\
\omega_\infty &:=&-\Ric(\omega)+\sqrt{-1}\partial\bar{\partial} u_\infty>0,\end{eqnarray*}then it is easy to see that  (\ref{MA-equ}) becomes
$$
\left(-\Ric(\omega)+\sqrt{-1}\partial \bar{\partial} u_\infty \right)^n=e^{u_\infty +\frac{k-1}{2(n-k)}u_\infty} \omega^n.
$$
Taking $\partial \bar{\partial} \log(\cdot)$ on both sides of the above equation we have that
$$
\Ric(\omega_\infty)=-\omega_\infty -\frac{k-1}{2(n-k)}\sqrt{-1}\partial\bar{\partial} u_\infty.
$$
This implies that $K_N$ is ample. The existence of a K\"ahler-Einstein metric is known by Aubin-Yau's theorem.

We also remark that the argument can be easily modified to prove the same result under the assumption:
$$
\alpha |X|^2\Ric(X, \overline{X})+\beta R(X, \overline{X}, X, \overline{X})\le -\sigma |X|^4, \forall  X \mbox{ of } (1, 0)\mbox{-type},
$$
for some positive constants $\alpha, \beta$ and $\sigma\ge 0$.
The existing literatures (e.g. \cite{Wu-Yau-2}) is enough to extend Theorem \ref{thm:main1} to the case that $\Ric_k$ is quasi-negative (as well as $\sigma$ above is quasi-positive) proving that $K_N$ is big. We leave the details to interested readers.

We are
 grateful to Professor M$^{\mbox{c}}$Kernan for showing us an example of a smooth algebraic variety $N^n$ with ample canonical line bundle, which  admits a linear hypersurface $\mathbb{P}^{n-1}$. Together with Theorem 1.3 of \cite{Ni-cpam}, the example and Theorem \ref{thm:main1}
  show that the class of manifolds with a K\"ahler metric of $\Ric_k<0$, for $1\le k\le n-1$, is a strictly smaller class than that of manifolds with $c_1<0$ (equivalently those admitting a K\"ahler metric with $\Ric<0$). Note that on this example with $\mathbb{P}^{n-1}$, by Theorem 1.3 of \cite{Ni-cpam} a K\"ahler metric with $S_{n-1}\le 0$ is not possible either. By taking product with a curve of high genus repeatedly this gives an example of K\"ahler manifold which has $\Ric_k<0$, but $S_{k-1}>0$ somewhere, since if $S_{k-1}\le 0$ everywhere, Theorem 1.3 of \cite{Ni-cpam}  implies the impossibility of an embedded $\mathbb{P}^{k-1}$ in such manifold. This picture contrasts sharply with the fact that the class of K\"ahler manifolds with $\Ric_k>0$ (for some $k<n$) is  strictly larger  than that of the Fano manifolds \cite{Ni}. It also suggests an interesting question, namely if a compact K\"ahler manifold with $\Ric_k<0$  admits a K\"ahler metric with $\Ric_{k+\ell}<0$ for $\ell\ge 1$ (with $k+\ell<n$ since the case $k+\ell=n$ has been proven)?

 \section{Appendix}

In this appendix first we show that the averaging technique in Section 2 (cf. Appendix of \cite{Ni-cpam}, which was suggested by F. Zheng) also gives a quick proof of a result of Demailly-Skoda \cite{DS} on the relation between the  Nakano positivity and Griffiths positivity of holomorphic vector bundles. The original proof used an action of $\mathbb{Z}_q^r$ (Royden used a similar action in \cite{Roy}).

\begin{proposition}[Demailly-Skoda]\label{prop:DS} Let $(E, h)$ be a holomorphic vector bundle with $\operatorname{rank}(E)=r$ over a complex manifold $N^n$. Let $(\det(E), \det(h))$ be the determinant line bundle. Assume that $(E, h)$ is Griffiths positive. Then $E\otimes \det(E)$ (equipped with the induced metric) is Nakano positive.
\end{proposition}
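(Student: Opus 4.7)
The plan is to work locally. At a point $p \in N$, pick a unitary frame $\{e_\alpha\}_{\alpha=1}^r$ of $E$ and normal holomorphic coordinates $\{z^i\}$ at $p$, and write $R_{i\bar{j}\alpha\bar{\beta}}$ for the Chern curvature of $(E,h)$. Since the curvature of $(\det E,\det h)$ is the trace $\sum_{\gamma} R_{i\bar{j}\gamma\bar{\gamma}}$, the curvature endomorphism of $E\otimes\det E$ is $\widetilde R_{i\bar{j}\alpha\bar{\beta}} = R_{i\bar{j}\alpha\bar{\beta}} + \bigl(\sum_{\gamma} R_{i\bar{j}\gamma\bar{\gamma}}\bigr)\delta_{\alpha\beta}$. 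For $u = \sum u^{i\alpha}\,\partial_i\otimes e_\alpha \in T'_pN\otimes E_p$, Nakano positivity of $E\otimes\det E$ amounts to the positivity of
\begin{equation*}
N(u) \;:=\; \sum_{i,j,\alpha,\beta} R_{i\bar{j}\alpha\bar{\beta}}\, u^{i\alpha}\bar u^{j\beta} \;+\; \sum_{i,j,\alpha,\gamma} R_{i\bar{j}\gamma\bar\gamma}\, u^{i\alpha}\bar u^{j\alpha}
\end{equation*}
for every nonzero $u$.

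Since $N(u)$ is invariant under independent unitary changes of frame on $T'N$ and $E$, and Griffiths positivity is similarly invariant, a singular-value decomposition of the matrix $(u^{i\alpha})$ lets us assume $u^{i\alpha} = \lambda_\alpha\,\delta^i_\alpha$ for $1\le \alpha\le m:=\min(n,r)$, with $\lambda_\alpha\ge 0$, and zero otherwise. In this normalization,
\begin{equation*}
N(u) \;=\; \sum_{\alpha,\beta=1}^{m} R_{\alpha\bar\beta\alpha\bar\beta}\,\lambda_\alpha\lambda_\beta \;+\; \sum_{\alpha=1}^{m}\sum_{\gamma=1}^{r} R_{\alpha\bar\alpha\gamma\bar\gamma}\,\lambda_\alpha^2.
\end{equation*}

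The key step is then a torus averaging in the spirit of Section 2. For $\phi=(\phi_1,\ldots,\phi_m)\in T^m$, set $\xi(\phi) := \sum_{\alpha=1}^{m} \lambda_\alpha e^{\i\phi_\alpha}\tfrac{\partial}{\partial z^\alpha}$ and $v(\phi) := \sum_{\alpha=1}^{m} e^{-\i\phi_\alpha} e_\alpha$. Griffiths positivity gives
\begin{equation*}
Q(\phi) \;:=\; \sum_{i,j,\gamma,\delta} R_{i\bar j\gamma\bar\delta}\,\xi(\phi)^i\,\overline{\xi(\phi)^j}\,v(\phi)^\gamma\,\overline{v(\phi)^\delta} \;\ge\; 0,
\end{equation*}
with strict inequality whenever $u\ne 0$, since then both $\xi(\phi)$ and $v(\phi)$ are nonzero. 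Expanding and integrating over $T^m$, the only surviving multi-index pairings are $(\alpha,\delta)=(\beta,\gamma)$ or $(\alpha,\delta)=(\gamma,\beta)$; by inclusion--exclusion this produces
\begin{equation*}
\frac{1}{(2\pi)^m}\int_{T^m} Q(\phi)\,d\phi \;=\; \sum_{\alpha,\gamma=1}^{m} R_{\alpha\bar\alpha\gamma\bar\gamma}\,\lambda_\alpha^2 \;+\; \sum_{\alpha,\beta=1}^{m} R_{\alpha\bar\beta\alpha\bar\beta}\,\lambda_\alpha\lambda_\beta \;-\; \sum_{\alpha=1}^{m} R_{\alpha\bar\alpha\alpha\bar\alpha}\,\lambda_\alpha^2,
\end{equation*}
which, compared with the formula for $N(u)$ above, gives the identity
\begin{equation*}
N(u) \;=\; \frac{1}{(2\pi)^m}\int_{T^m} Q(\phi)\,d\phi \;+\; \sum_{\alpha=1}^{m} R_{\alpha\bar\alpha\alpha\bar\alpha}\,\lambda_\alpha^2 \;+\; \sum_{\alpha=1}^{m}\sum_{\gamma=m+1}^{r} R_{\alpha\bar\alpha\gamma\bar\gamma}\,\lambda_\alpha^2
\end{equation*}
(the last double sum being empty when $r\le n$). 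Each term on the right is nonnegative by Griffiths positivity, and each is strictly positive when $u\ne 0$, whence $N(u)>0$; this yields Nakano positivity of $E\otimes\det E$.

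The main obstacle is really only the bookkeeping: identifying which multi-index matchings survive the $T^m$ average, and handling the mild case split $r\le n$ versus $r>n$. Once the SVD normalization is made and the characters $e^{\i\phi_\alpha}$ are inserted in the right places, the desired identity expressing $N(u)$ as a sum of Griffiths-positive contributions is then mechanical.
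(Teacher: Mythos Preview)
Your argument is correct, but it takes a different route from the paper's. The paper averages over the full sphere $\mathbb{S}^{2r-1}$: for the given $\tau$ and a variable $w\in\mathbb{S}^{2r-1}$ it sets $W=\sum_\alpha(\sum_i\tau^{i\alpha}w^i)\partial_\alpha$ and $u=\sum_k\bar w^k e_k$, then applies Berger's lemma to $\aint_{\mathbb{S}^{2r-1}}\langle\Theta_{W\overline W}u,\bar u\rangle$ to obtain the Nakano form $\Theta_{\alpha\bar\beta k\bar k}\tau^{i\alpha}\overline{\tau^{i\beta}}+\Theta_{\alpha\bar\beta i\bar k}\tau^{i\alpha}\overline{\tau^{k\beta}}$ directly, with no preliminary normalization of $\tau$. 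Your proof instead first diagonalizes the coefficient matrix by SVD and then averages over the torus $T^m$; the orthogonality of characters replaces Berger's lemma. This is precisely the kind of $\mathbb{Z}_q^r$/Royden-type action the paper says it wishes to bypass (see the sentence introducing the proposition). The paper's spherical average is slightly slicker in that it needs no SVD step and yields the identity for arbitrary $\tau$ in one stroke; your approach is arguably more elementary because the torus integrals are trivial character computations and no moment formula on the sphere is invoked. One very small overstatement: not ``each'' of your three terms is strictly positive when $u\ne0$ (the last sum is empty when $r\le n$), but the torus average alone is already strictly positive, so the conclusion stands.
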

Before the proof recall that Nakano positivity means that for any section nonzero section $\tau=\sum_{i=1}^r\sum_{\alpha=1}^n \tau^{i\alpha} \frac{\partial}{\partial z^\alpha} \otimes e_i$ (abbreviated as $\tau^{i\alpha} \frac{\partial}{\partial z^\alpha} \otimes e_i$) of $T'N\otimes E$,
$$
\sum_{\alpha, \beta=1}^n \sum_{i, k=1}^r\Theta_{\alpha\bar{\beta}i\bar{k}} \tau^{i\alpha}\overline{\tau^{k\beta}}>0
$$
where $\Theta$ denotes the curvature of $E$ with
$\Theta_{\alpha\bar{\beta}i\bar{k}}=\langle \Theta_{\frac{\partial}{\partial z^\alpha} \frac{\partial}{\partial \bar{z}^\beta}} (e_i), \overline{e_k}\rangle$.
 Below we assume that $\{\frac{\partial}{\partial z^\alpha}\}$ and $\{e_i\}$ are normal frames at a point $p\in N$.
\begin{proof} By direct calculation on the metric of the tensor product, the Nakano positivity of  $E\otimes \det(E)$ amounts to (Einstein convention applied) showing that for any $\tau\ne 0$
\begin{equation}\label{eq:Nak}
\Theta_{\alpha \bar{\beta} k\bar{k}}\tau^{i\alpha}\overline{\tau^{i\beta}}+\Theta_{\alpha\bar{\beta}i\bar{k}} \tau^{i\alpha}\overline{\tau^{k\beta}}>0.
\end{equation}
For a section $\tau$ as above and $w=(w^1, \cdots, w^r)\in \mathbb{S}^{2r-1}\subset E_p$ (identified as $\mathbb{C}^r$), let $W=\sum_{\alpha=1}^n \left(\sum_{i=1}^r \tau^{i\alpha}w^{i}\right)\frac{\partial}{\partial z^\alpha}$ and $u=\sum_{k=1}^r\bar{w}^ke_k$ be elements of $T'_pN$ and $E_p$. The Griffiths positivity implies that $\langle \Theta_{W\overline{W}}(u), \bar{u}\rangle>0$ for generic $w\in \mathbb{S}^{2r-1}$.  As in \cite{Ni-Zheng2}, taking the integration average $\aint$ over $\mathbb{S}^{2r-1}$, the Berger's lemma implies that
\begin{eqnarray*}
r(r+1)\aint \langle \Theta_{W\overline{W}}(u), \bar{u}\rangle\, d\mu(w)&=&r(r+1)\aint \Theta_{\alpha \bar{\beta} l\bar{k}}\tau^{i\alpha} w^i\overline{\tau^{j\beta} w^j} \bar{w}^k w^l\, d\mu(w)\\
&=& \sum_{i\ne k} \Theta_{\alpha\bar{\beta}k\bar{k}} \tau^{i\alpha}\overline{\tau^{i\beta}}+\sum_{i\ne j} \Theta_{\alpha\bar{\beta}i\bar{j}} \tau^{i\alpha}\overline{\tau^{j\beta}}+2\Theta_{\alpha\bar{\beta}i\bar{i}} \tau^{i\alpha}\overline{\tau^{i\beta}}\\
&=& \Theta_{\alpha \bar{\beta} k\bar{k}}\tau^{i\alpha}\overline{\tau^{i\beta}}+\Theta_{\alpha\bar{\beta}i\bar{k}} \tau^{i\alpha}\overline{\tau^{k\beta}}.
\end{eqnarray*}
This proves (\ref{eq:Nak}), hence the proposition.
\end{proof}

Secondly we include M$^{\mbox{c}}$Kernan's construction of the algebraic manifold mentioned in the previous section. The result and the proof are all due to him.

\begin{proposition}[M$^{\mbox{c}}$Kernan]\label{prop:james} Fix a positive integer $n$.
There is a smooth projective variety X of dimension n with the following two properties
\begin{itemize}\item[(1)] $K_X$ is ample, and
\item[(2)] $X$ contains a copy of $\mathbb{P}^{n-1}$.
\end{itemize}
\end{proposition}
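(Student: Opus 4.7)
The plan is to realize $X$ as a generic element of a carefully chosen linear system on a projective bundle over $\mathbb{P}^{n-1}$, arranged so that $\mathbb{P}^{n-1}$ appears in $X$ as a section with very negative normal bundle and so that smoothness of $X$ along this section is automatic. Fix an integer $a > n$, set $E := \mathcal{O}_{\mathbb{P}^{n-1}} \oplus \mathcal{O}_{\mathbb{P}^{n-1}}(-a)^{\oplus 2}$, and let $W := \mathbb{P}(E)$, a smooth projective $(n+1)$-fold with projection $\pi \colon W \to \mathbb{P}^{n-1}$ and tautological class $\xi = c_1(\mathcal{O}_W(1))$. The inclusion of the $\mathcal{O}$-summand defines a section $\Sigma \subset W$ isomorphic to $\mathbb{P}^{n-1}$, with $\xi|_\Sigma = \mathcal{O}$ and normal bundle $N_{\Sigma/W} = \mathcal{O}_{\mathbb{P}^{n-1}}(-a)^{\oplus 2}$.

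Next, for an integer $\alpha \geq 4$, I put $L := \alpha\xi + \pi^*\mathcal{O}_{\mathbb{P}^{n-1}}(-a)$ and take $X$ to be a generic member of $|L|$. Since $L|_\Sigma = \mathcal{O}_{\mathbb{P}^{n-1}}(-a)$ has no global sections, every element of $|L|$ vanishes on $\Sigma$, and a direct computation in fiber coordinates shows the base locus of $|L|$ equals $\Sigma$; Bertini then yields smoothness of $X$ on $W \setminus \Sigma$. Along $\Sigma$ itself, smoothness at a point is equivalent to the normal derivative of the defining section $F$ being nonzero there, and globally this derivative is a section of
\[
L|_\Sigma \otimes N_{\Sigma/W}^\vee \;=\; \mathcal{O}_{\mathbb{P}^{n-1}}(-a) \otimes \mathcal{O}_{\mathbb{P}^{n-1}}(a)^{\oplus 2} \;=\; \mathcal{O}_{\mathbb{P}^{n-1}}^{\oplus 2},
\]
a trivial rank-$2$ bundle. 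The map $H^0(W, L) \to H^0(\Sigma, \mathcal{O}^{\oplus 2}) = \mathbb{C}^2$ is surjective, and a generic $F$ maps to a nonzero constant vector; this vector has no zeros on $\Sigma$, so $X$ is smooth along all of $\Sigma$ at once.

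To conclude, the canonical formula $K_W = -3\xi + \pi^*\mathcal{O}_{\mathbb{P}^{n-1}}(2a - n)$ together with adjunction gives $K_W + L = (\alpha - 3)\xi + \pi^*\mathcal{O}_{\mathbb{P}^{n-1}}(a - n)$. The standard ampleness criterion on projective bundles---positivity on fibers ($\alpha > 3$), on the section $\Sigma$ ($a > n$), and on the remaining sections of $W$---is satisfied for our choice of $\alpha$ and $a$, so $K_W + L$ is ample on $W$ and therefore $K_X = (K_W + L)|_X$ is ample on $X$. Combined with $\Sigma \cong \mathbb{P}^{n-1} \subset X$, this completes the construction.

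The main technical obstacle is step two, the smoothness of $X$ along $\Sigma$. For generic linear systems of divisors containing a codimension-one subvariety, smoothness on the subvariety typically fails on a codimension-two subset, which is nonempty once $n \geq 3$. The specific choice of $E$ and $L$ above is designed precisely to eliminate this obstruction: the triviality of $L|_\Sigma \otimes N_{\Sigma/W}^\vee$ reduces smoothness on $\Sigma$ to the nonvanishing of a single constant vector in $\mathbb{C}^2$, a generic condition with no codimension-two cost.
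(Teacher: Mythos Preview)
Your argument is correct and the construction is genuinely different from the paper's.  McKernan's proof in the paper proceeds through singular geometry: he takes the projective cone $W$ over the $d$-uple Veronese $\mathbb{P}^{n-1}$ (with $d>n$), passes to a double cover $Y\to W$ branched along a high-degree smooth divisor missing the vertex, and then resolves the two resulting cone singularities; the exceptional $\mathbb{P}^{n-1}$'s furnish property~(2), and ampleness of $K_X$ comes from Riemann--Hurwitz combined with a relative-ampleness computation (Lemma~\ref{lem:02}) for the discrepancy of the cone point.  Your approach stays entirely in the smooth category: you realize $X$ as a general hypersurface in a rank-$3$ projective bundle over $\mathbb{P}^{n-1}$, with the copy of $\mathbb{P}^{n-1}$ appearing as the section forced into the base locus of $|L|$.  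The key insight --- choosing $E$ and the twist so that $L|_\Sigma\otimes N_{\Sigma/W}^\vee$ is trivial, reducing smoothness along $\Sigma$ to nonvanishing of a constant vector in $\mathbb{C}^2$ --- neatly sidesteps the usual Bertini obstruction along a positive-dimensional base locus.  Your ampleness step is also simpler, since $\rho(W)=2$ and the nef cone is visibly spanned by $\xi$ and $\pi^*H$, so $K_W+L=(\alpha-3)\xi+(a-n)\pi^*H$ is ample exactly when $\alpha>3$ and $a>n$, and adjunction finishes.  The paper's route has the advantage of being a standard cover-and-resolve recipe that generalizes readily; yours is more self-contained and avoids singularities, log discrepancies, and branched covers altogether.
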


\begin{lemma}\label{lem:02}
 Let $X$ be the cone  over  the $d$-uple embedding of $\mathbb{P}^{n-1}$ in $\mathbb{P}^N$.\footnote{Here we take the cone over a hyperplane section in $\mathbb{P}^{N+1}$. }  Let $\pi: Y \rightarrow X$ be the blow up of the vertex $p$, let $E$ be the
exceptional divisor, let $H \subset E \simeq \mathbb{P}^{n-1}$
be a hyperplane, let $D = K_Y|_E$,
the restriction of the canonical divisor of $Y$ to $E$ and let $m=d-n$.
We have
\begin{itemize}
\item[(1)]  $K_Y = \pi^* K_X-\frac{m}{d}E$ and
\item[(2)] $D = mH$.
\end{itemize}
In particular, we have
\begin{itemize}
\item[(i)] If $d < n$ then $-D$ is ample.
\item[(ii)] If $d = n$ then $D$ is numerically trivial.
\item[(iii)] If $d > n$ then $D$ is ample.
\end{itemize}
\end{lemma}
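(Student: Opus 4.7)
The plan is to realize $Y$ explicitly as a $\mathbb{P}^1$-bundle over $\mathbb{P}^{n-1}$ and then extract both (1) and (2) from adjunction along $E$. The key geometric input is that a neighborhood of the vertex $p\in X$ is the cyclic quotient $\mathbb{A}^n/\mu_d$ under the diagonal action of the $d$-th roots of unity; blowing up the origin equivariantly from $\mathrm{Bl}_0\mathbb{A}^n = \mathrm{Tot}(\mathcal{O}_{\mathbb{P}^{n-1}}(-1))$, and noting that $\mu_d$ acts trivially on the base $\mathbb{P}^{n-1}$ but with weight one on the fibers, produces a smooth $Y$ with exceptional divisor $E\simeq\mathbb{P}^{n-1}$ whose normal bundle, after quotienting fiberwise, is
\begin{equation*}
N_{E/Y}\simeq \mathcal{O}_{\mathbb{P}^{n-1}}(-1)^{\otimes d} = \mathcal{O}_{\mathbb{P}^{n-1}}(-d).
\end{equation*}
Compactifying along the hyperplane at infinity of the projective cone identifies $Y$ with $\mathbb{P}_{\mathbb{P}^{n-1}}(\mathcal{O}\oplus\mathcal{O}(-d))$, with $E$ the section of negative self-intersection.

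Assertion (2) then follows from adjunction in one line: using $K_E = K_{\mathbb{P}^{n-1}} = -nH$ and $E|_E = N_{E/Y} = -dH$,
\begin{equation*}
D = K_Y|_E = K_E - E|_E = -nH + dH = (d-n)H = mH.
\end{equation*}
For (1), I would write $K_Y = \pi^* K_X + cE$ in $\mathrm{Pic}(Y)\otimes\mathbb{Q}$, which is legitimate because $X$ is $\mathbb{Q}$-Gorenstein (since $K_{\mathbb{P}^{n-1}}$ is a rational multiple of the polarization $\mathcal{O}(d)$). Since $\pi$ contracts $E$ to the single point $p$, any sufficiently divisible Cartier multiple of $K_X$ pulls back under $\pi$ to a bundle trivial on $E$, so $(\pi^*K_X)|_E = 0$ in $\mathrm{Pic}(E)\otimes\mathbb{Q}$. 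Combining with (2) yields $mH = c(-dH)$, hence $c = -m/d$, proving (1). The three consequences (i)--(iii) then follow immediately from $D=mH$ and the ampleness of $H$ on $\mathbb{P}^{n-1}$.

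The main obstacle is the precise identification of $N_{E/Y}$: the naive answer $\mathcal{O}_E(-1)$ would yield the wrong discrepancy, and one must carefully track the extra factor of $d$ produced by the $\mu_d$-quotient (equivalently, by the $d$-uple embedding). A clean alternative that bypasses quotient singularities is to begin from the Hirzebruch-type description $Y = \mathbb{P}_{\mathbb{P}^{n-1}}(\mathcal{O}\oplus\mathcal{O}(-d))$, single out $E$ as the section with self-intersection $-d$, and verify that it is exactly the divisor contracted by the natural morphism $Y\to X$; after this identification the adjunction and $\mathbb{Q}$-Gorenstein restriction steps proceed verbatim.
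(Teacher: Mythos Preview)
Your proof is correct and follows essentially the same route as the paper: both arguments hinge on adjunction along $E$, the identification $E|_E=N_{E/Y}=\mathcal{O}_{\mathbb{P}^{n-1}}(-d)$, and the vanishing $(\pi^*K_X)|_E=0$. The only differences are cosmetic: you derive (2) first and then solve for the discrepancy in (1), whereas the paper writes $K_Y+E=\pi^*K_X+aE$, restricts to $E$ to find $a=n/d$, and then reads off (2) from (1); also, you supply an explicit justification for $E|_E=-dH$ via the $\mu_d$-quotient or the $\mathbb{P}^1$-bundle model, while the paper simply uses this fact without comment.
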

\begin{proof}
 For (1), we start with the equation
$$K_Y + E = \pi^* K_X + aE$$
where the rational number $a$, known as the log discrepancy, is to be
determined. If we restrict both sides to $E$ we get
\begin{eqnarray*} -nH &=& K_E\\
&=& \left.(K_Y + E)\right|_E\\
&=& \left.(\pi^*K_X + aE)\right|_E\\
&=& \pi^*\left.K_X\right|_E + \left. aE\right|_E\\
&=& -ad\, H.
\end{eqnarray*}
Here the first line is the usual formula for the canonical divisor of
projective space and we apply adjunction to get from the first line to
the second line.
It follows that $a =\frac{n}{d}$.
This gives (1) and restricting to $E$ gives (2). \end{proof}
Now we prove Proposition \ref{prop:james}. \begin{proof} We start with $W \subset \mathbb{P}^{N+1}$, the closure  of the cone from Lemma \ref{lem:02}, for any $d > n$.
Then $W$ is a projective variety with an isolated singularity $p$. Pick
an ample divisor $H$. Let $\pi : V \rightarrow W$ blow up the point $p$. By Lemma \ref{lem:02}
we have $
K_V = \pi^*K_W - \frac{m}{d} E$. Fix a positive integer $k$ and consider the divisor
$K_W + kH$.
Let $G = \pi^*H$. We have
\begin{eqnarray*}
K_V + kG &=& \pi^* (K_W + kH) - \frac{m}{d}E\\
&=& \pi^*(kH) + \pi^*K_W - \frac{m}{d}E.
\end{eqnarray*}
As $\pi^*K_W -\frac{m}{d} E$ is relatively ample, it follows that $K_V + kG$ is ample
if $k$ is sufficiently large (cf. \cite{Hartshorne}, Proposition (7.10.b) of Chapter II).

Pick $B \in |2kH|$, a general element of the linear system $|2kH|$. Then
Bertini implies that $B$ is a smooth divisor that does not contain $p$. The
$\mathbb{Q}$-divisor
$\frac{1}{2}B -H$
defines a double cover $\sigma : Y\rightarrow W$ with branch locus $B$ (cf. \cite{KM}).
Then $Y$ has two isolated singularities $q$ and $r$ lying over $p$ and is
otherwise smooth. Both singularities are analytically isomorphic to
the cone singularity of Lemma \ref{lem:02}.

The Riemann-Hurwitz formula reads as
$$K_V =\sigma^*
(K_W + \frac{1}{2}B).$$
As
$$ K_W +\frac{1}{2}B\sim_{\mathbb{Q}} K_W + kH$$
is ample, it follows that $K_V$ is ample, as $\sigma$ is finite.

Let $\psi: X\rightarrow Y$ be the blow up of $q$ and $r$. Then $X$ is a smooth
projective variety and
$$K_X = \psi^* K_Y -\frac{m}{d}\left(E_q + E_r\right),$$
where $E_q$ is the exceptional divisor over $q$ and $E_r$ is the exceptional
divisor over $r$. Note that $X$ is also double cover $\tau : X\rightarrow V $ of $V$
branched over the divisor
$C - 2H$,
where $C = \pi^*B$ is the strict transform of $B$. Observe that via the
commutative diagram
\[
\begin{tikzcd}
X \arrow[r,"\psi"] \arrow[d,swap,"\tau"] &
  Y \arrow[d,"\sigma"] \\
V \arrow[r,"\pi"] & W
\end{tikzcd}
\]
we have
\begin{eqnarray*}K_X &=& \psi^*K_Y -\frac{m}{d}\left(E_q+E_r\right)\\
&=& \psi^*\sigma^*
(K_W +\frac{1}{2}B) -\frac{m}{d}
(E_q + E_r)\\
&=& \tau^*\pi^*(
K_W +\frac{1}{2}B)-\frac{m}{d} \tau^* E\\
&=& \tau^*(\pi^*
(K_W +\frac{1}{2}B) -\frac{m}{d} E).
\end{eqnarray*}
We already saw that
$$\pi^*
(K_W +\frac{1}{2}B)-\frac{m}{d} E$$
is ample, for $k$ sufficiently large. It follows that $K_X$ is ample as $\tau$ is a
finite morphism. This proves the claim (1).

On the other hand both $E_q$ and $E_r$ are copies of projective space and
this gives  (2).
\end{proof}

Let $\mathcal{M}_{n, k}^{-}$ be the set of $n$-dimensional compact  manifolds with a K\"ahler metric such that its $\Ric_k<0$.  Let $\mathcal{M}_n^-$ be the set of $n$-dimensional compact  manifolds with ample canonical line bundle.
Let $\mathcal{S}_{n, k}^-$ be the set of $n$-dimensional compact  manifolds with a K\"ahler metric such that its $k$-th scalar $S_k<0$. Clearly $\mathcal{M}_{n, k}^{-}\subset \mathcal{S}_{n, k}^-$.
\begin{corollary} The following relation holds:
\begin{itemize}
\item[(1)]
$\mathcal{M}_{n, k}^{-}\subsetneq \mathcal{M}_{n}^{-}$, $\quad  \forall\,  1\le k<n, $
and
\item[(2)]  $\mathcal{M}_{n, k}^{-}\nsubseteq  \mathcal{S}_{n, k-1}^-, \forall\,  2\le k\le n.$
\end{itemize}
\end{corollary}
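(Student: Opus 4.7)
For part (1), the inclusion $\mathcal{M}_{n,k}^-\subset\mathcal{M}_n^-$ is a direct consequence of Theorem~\ref{thm:main1}: on a compact manifold, the strict pointwise condition $\Ric_k<0$ upgrades by compactness to $\Ric_k\le-(k+1)\sigma|X|^2$ for some $\sigma>0$, and then the theorem yields $K_N$ ample. To prove the strictness for $1\le k\le n-1$, I would invoke M$^{\mbox{c}}$Kernan's example $X^n$ of Proposition~\ref{prop:james}: by construction $K_{X^n}$ is ample, so $X^n\in\mathcal{M}_n^-$; on the other hand $X^n$ contains a linear $\mathbb{P}^{n-1}$, and hence a $\mathbb{P}^k$ for every $1\le k\le n-1$, so Theorem~1.3 of \cite{Ni-cpam} forbids any K\"ahler metric on $X^n$ with $S_k\le 0$, and a fortiori any metric with $\Ric_k<0$. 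Thus $X^n\in\mathcal{M}_n^-\setminus\mathcal{M}_{n,k}^-$.

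For part (2), the case $k=n$ is handled by $X^n$ itself: the Aubin-Yau K\"ahler-Einstein metric gives $\Ric=-\omega$, hence $\Ric_n<0$ and $X^n\in\mathcal{M}_{n,n}^-$; meanwhile the embedded $\mathbb{P}^{n-1}$ and Theorem~1.3 of \cite{Ni-cpam} exclude $S_{n-1}\le 0$, giving $X^n\notin\mathcal{S}_{n,n-1}^-$. For $k<n$, the plan is to take the product $Y:=X^k\times C_1\times\cdots\times C_{n-k}$ of M$^{\mbox{c}}$Kernan's $k$-dimensional example with $n-k$ compact Riemann surfaces of genus~$\ge 2$. Since the embedding $\mathbb{P}^{k-1}\subset X^k$ yields $\mathbb{P}^{k-1}\times\{q_1\}\times\cdots\times\{q_{n-k}\}\subset Y$, Theorem~1.3 of \cite{Ni-cpam} precludes $S_{k-1}\le 0$ on $Y$ for any K\"ahler metric, so $Y\notin\mathcal{S}_{n,k-1}^-$.

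The principal obstacle is then to equip $Y$ with a K\"ahler metric realizing $\Ric_k<0$. The natural candidate is the product of the K\"ahler-Einstein metric on $X^k$ with hyperbolic metrics on the $C_j$'s rescaled to have very large negative Gauss curvature. The curvature tensor of $Y$ decomposes by factor, so that for every $(1,0)$-vector $W\in T'_yY$ and every $k$-plane $\Pi\ni W$, the partial trace $\sum_{i=1}^{k}R^{Y}(W,\overline{W},e_i,\overline{e_i})$ splits into an $X^k$-piece of the form $|W|^2\,\mathrm{tr}(M\cdot Q)$, where $Q$ is the bisectional form of $\omega_{X^k}$ at $W/|W|$ (with $\mathrm{tr}(Q)=-1$ by K\"ahler-Einstein) and $M$ is a positive semidefinite matrix encoding the projection of $\Pi$ onto $T'X^k$, plus curve contributions carrying the large negative Gauss curvatures that dominate whenever $W$ or $\Pi$ has a nontrivial component along some $C_j$-factor. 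The subtle case is when $W\in T'X^k$ and $\Pi$ is tilted away from $T'X^k$: the curve contributions then vanish and the sum reduces to $|W|^2\,\mathrm{tr}(MQ)$, which is not a priori sign-definite at points where the bisectional curvature of $\omega_{X^k}$ has mixed sign. Resolving this case is the technical crux and will likely require replacing the product metric by a non-product K\"ahler deformation that couples the factors, chosen so as to restore strict negativity on the tilted $k$-planes, combined with a compactness argument on the Grassmannian bundle of $k$-planes over $Y$. With such a metric in hand, $Y\in\mathcal{M}_{n,k}^-\setminus\mathcal{S}_{n,k-1}^-$, completing the proof.
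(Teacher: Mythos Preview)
Your overall strategy coincides with the paper's: invoke Theorem~\ref{thm:main1} for the inclusion in (1), use M$^{\mbox{c}}$Kernan's example together with Theorem~1.3 of \cite{Ni-cpam} for the strictness, and for (2) take products $Y=X^{k}\times C_{1}\times\cdots\times C_{n-k}$ with high-genus curves. The paper's proof is a single sentence asserting exactly this, with no further detail. Your treatment of (1) and of the case $k=n$ in (2) is complete and matches the paper.

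Where you go beyond the paper is in your analysis of the product metric for (2) with $k<n$, and there you have correctly isolated a real difficulty rather than an imagined one. Your observation is sound: if $W\in T'X^{k}$ and $\Pi$ is spanned by $W$ together with $k-1$ curve directions (possible whenever $n-k\ge k-1$), then the curve curvatures contribute nothing and $\Ric_k(W,\overline W)$ with respect to $\Pi$ reduces to $H^{X^k}(W)\,|W|^{2}$; since $X^{k}$ contains $\mathbb{P}^{1}$, Theorem~1.3 of \cite{Ni-cpam} forces $H^{X^k}>0$ somewhere, so the naive product metric cannot have $\Ric_k<0$ at that point. The paper's one-line proof does not address this, and your own proposed remedy---a non-product K\"ahler deformation coupling the factors---is only sketched, with no indication of which deformation to take or why it would force the partial traces negative on all tilted $k$-planes. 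So this step remains a genuine gap in your proposal; you have identified the obstacle precisely, but not removed it.
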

\begin{proof} The result follows by combining Theorem 1.3 of \cite{Ni-cpam}, the example above and Theorem \ref{thm:main1}, after taking a suitable product with high genus curves multiple times.
\end{proof}

The  relation (2)  for $k=2$ in particular implies that Theorem \ref{thm:main1} provides a new result beyond what the main result of \cite{Wu-Yau} can possibly cover.

\section*{Acknowledgments}{}

The first author's research is supported by China Postdoctoral Grant No. BX$20200356$.
The research of the second author is partially supported by  ``Capacity Building for Sci-Tech Innovation-Fundamental Research Funds".

\end{document}